\DeclareMathOperator{\am}{am}
\DeclareMathOperator{\dist}{dist}
\DeclareMathOperator{\Am}{Am}
\newtheorem{theorem}{Theorem}[section]
\newtheorem{lemma}[theorem]{Lemma}
\newtheorem{corollary}[theorem]{Corollary}
\theoremstyle{definition}
\theoremstyle{remark}
\newtheorem{remark}[theorem]{Remark}
\numberwithin{equation}{section}
\begin{document}

\title[A geometric characterization of range-kernel complementarity]{A geometric characterization of range-kernel complementarity}
\author{Dimosthenis Drivaliaris}
\address{Department of Financial and Management Engineering\\
University of the Aegean\\
Chios, 82100\\
Greece}
\email{d.drivaliaris@aegean.gr}
\author{Nikos Yannakakis}
\address{Department of Mathematics\\
School of Mathematical and Physical Sciences\\
National Technical University of Athens\\
Iroon Polytexneiou 9\\
15780 Zografou\\
Greece}
\email{nyian@math.ntua.gr}
\subjclass[2000]{Primary 47A05 Secondary 47A10, 46C50}
\commby{}

\begin{abstract}
We show that a bounded linear operator on a Banach space with closed range has range-kernel complementarity if and only if its generalized amplitude is less than $\pi$. An application to the strong convergence of the iterations of a bounded linear operator is also given.
\end{abstract}

\maketitle
\section{Introduction}

Let $X$ be a complex Banach space and $A:X\rightarrow X$ be a bounded linear operator on $X$. If we denote by $R(A)$ and $N(A)$ the range and the kernel of $A$ respectively and $R(A)$ is closed, then range-kernel complementarity is the decomposition
\begin{equation}
\label{rkc}
X=R(A)\oplus N(A)\,.
\end{equation}
In a sense, range-kernel complementarity is the closest thing to $A$ being invertible, since it is equivalent to $A$ being of the form ``invertible$\,\oplus\, 0$''. Our aim in this short paper is its characterization by a simple geometric condition.

Our approach combines two ingredients. The first is that (\ref{rkc}) is possible if and only if $A$ can be factorized as $TP$ with $T$ invertible and $P$ a projection, see \cite[Theorem 3]{mbehkta}. The second is that (\ref{rkc}) follows from a suitable angle of $A$ being less than $\pi$, see \cite[Theorem 3.4]{drivyann}.

The idea behind the possible connection between range-kernel complementarity and some geometric condition on $A$ originates from the following question. If an angle condition is imposed on $A$ (for example, the well-known $Re\langle Ax,x\rangle\geq c\|x\|^2$ of the Lax-Milgram Theorem which implies that $A$ is invertible) how far from invertible can $A$ be? The answer is interesting and surprising; $A$ cannot be too bad as a quite natural angle condition implies (\ref{rkc}).

We conclude this paper with an application of our results to the problem of strong convergence of the iterates $T^n$ of an asymptotically regular operator $T$, for which $R(I-T)$ is closed.

For other characterizations of range-kernel complementarity we refer the interested reader to \cite{mbehkta} and the references therein.

\section{Main result}

In order to formulate our geometric condition we first define the angle of $A$ with respect to a certain class of linear operators.
This class $\mathcal{E}_A$ for $A$ is defined as follows
\[\mathcal{E}_A=\left\{T\in B(X): T(N(A))=N(A)\text{ and }A+tT\text{ is invertible for some }t>0\right\}\,.\]

\begin{remark}
Note that in the terminology of \cite[p. 49]{gohberg} if $T\in\mathcal{E}_A$, then $A+tT,\,\,\,t\in\mathbb{C}$, is a regular linear pencil.
\end{remark}

If $[\cdot,\cdot]$ is a semi-inner product in $X$ compatible with its norm, then the cosine of $A$ with respect to $T\in\mathcal{E}_A$ is defined as
\[\cos_T(A)=\inf\left\{\frac{Re[Ax,Tx]}{\|Ax\|\,\|Tx\|}: x\notin N(A)\cup N(T)\right\}\]
and the angle of $A$ with respect to $T$ is defined as
\[\varphi_T(A)=\arccos\left(\cos_T(A)\right)\,.\]

For $T=e^{i\theta}I\,,\,\,\,\theta\in[0,2\pi]$, the above is nothing more than the angle of $A$ along the ray
\[\rho_\theta=\left\{0\right\}\cup\left\{z\in\mathbb C: \arg z=\theta\right\}\]
which was defined by Gustafson and Krein in \cite{gustafson} and \cite{krein} respectively. For each $\theta$ this angle $\varphi_\theta(A)$ measures the maximum turning effect of $A$ along the ray $\rho_\theta$.

The amplitude of $A$, also introduced by Krein in \cite{krein}, is then defined as
\[\am(A)=\min\left\{\phi_T(A):\,T=e^{i\theta}I,\,\,\,\theta\in [0,2\pi]\right\}\,.\]
The amplitude compares the maximum turning effect of $A$ along every possible $\rho_\theta$ and provides the smallest one.

In \cite{drivyann} we proved that if $A$ has closed range, $R(A)+N(A)$ is closed and $\am(A)<\pi$, then $A$ has range-kernel complementarity. The converse of this is not true, since it may be proved that $\am(A)<\pi$ implies that the spectrum of $A$ is confined to a sector of the complex plane. In \cite{drivyann1} the hypothesis that $R(A)+N(A)$ is closed was dropped and a characterization of range-kernel complementarity was given, using the angle of $A$ along a curve, for closed range operators whose spectrum is not confined to a sector of $\mathbb C$, but still does not separate $0$ from $\infty$. Hence, again, no complete characterization of range-kernel complementarity was given.

Trying to amend these shortcomings and motivated by the amplitude of Krein we define the generalized amplitude of $A$ with respect to $\mathcal{E}_A$ to be
\[\Am(A)=\inf\left\{\varphi_T(A): T\in\mathcal{E}_A\right\}\,.\]

\begin{remark}
(a) Obviously $\Am(A)\leq \am(A)$, for all operators $A$.\\
(b) In the special case of an invertible operator $A$, we have that $A\in\mathcal{E}_A$ and hence $\Am(A)=\varphi_A(A)=0$. This simple example encapsulates a crucial difference between the generalized amplitude and Krein's one. The bilateral shift $A$ on $l^2$ is an invertible operator and so $\Am(A)=0$, whereas its Krein amplitude is equal to $\pi$, since its spectrum separates 0 from $\infty$. Note that by \cite[Theorem 4.1]{drivyann1} the angle of $A$ along any curve is also equal to $\pi$.\\
(c) More generally, if $A:X\rightarrow X$ is invertible, then for the non-invertible operator $A\oplus 0:X\oplus X\rightarrow X\oplus X$ we have that $A\oplus I\in\mathcal{E}_A$ and hence $\Am(A\oplus 0)\leq\frac{\pi}{2}$. Taking $A$ to be the bilateral shift we get an example of a non-invertible operator with $\Am(A)<\am(A)$.
\end{remark}

Using this generalized amplitude we get the following characterization of range-kernel complementarity.

\begin{theorem}
\label{main}
Let $X$ be a Banach space and $A:X\rightarrow X$ be a bounded linear operator with closed range. Then
\[X=R(A)\oplus N(A)\]
if and only if $\Am(A)<\pi$.
\end{theorem}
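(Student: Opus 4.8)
The plan is to establish the two implications separately; the substantial one is ``$\Am(A)<\pi\Rightarrow X=R(A)\oplus N(A)$''. For the ``only if'' direction, assume $X=R(A)\oplus N(A)$ and let $P$ be the bounded projection of $X$ onto $R(A)$ along $N(A)$. Since $R(A)$ is closed and meets $N(A)$ trivially, $A|_{R(A)}$ is an automorphism of $R(A)$, so $T:=(A|_{R(A)})\oplus I_{N(A)}$ is a bounded invertible operator with $A=TP$, $T(N(A))=N(A)$, and $A+tT=T(tI+P)$ invertible for every $t>0$; hence $T\in\mathcal{E}_A$. Because $Tx=Ax+(I-P)x$ with $Ax\in R(A)$ and $(I-P)x\in N(A)$, one checks that $\cos_T(A)=\inf\{\,Re[y,y+n]/(\|y\|\,\|y+n\|):y\in R(A)\setminus\{0\},\ n\in N(A)\,\}$, and it remains to see that this infimum is strictly bigger than $-1$. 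This is the only delicate point in this direction, and it is where the hypothesis really enters: since $X=R(A)\oplus N(A)$ is a \emph{topological} direct sum, the minimal angle between the closed subspaces $R(A)$ and $N(A)$, measured through the semi-inner product, is positive, and this bounds the above ratio uniformly away from $-1$. Thus $\Am(A)\le\varphi_T(A)<\pi$.

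For the ``if'' direction, fix $T\in\mathcal{E}_A$ with $\varphi_T(A)<\pi$, a constant $c\in(-1,1]$ with $Re[Ax,Tx]\ge c\,\|Ax\|\,\|Tx\|$ for all $x\notin N(A)\cup N(T)$, and $t_0>0$ with $A+t_0T$ invertible. From $(A+t_0T)(N(A))=t_0\,T(N(A))=N(A)$ together with the injectivity of $A+t_0T$, the operator $T$ restricts to an automorphism of $N(A)$, so there is $\delta>0$ with $\|Tn\|\ge\delta\|n\|$ on $N(A)$; let $m>0$ denote the minimum modulus of $A$. \textsc{Step 1} ($R(A)\cap N(A)=\{0\}$): if $0\ne y=Az\in R(A)\cap N(A)$, choose $w_0\in N(A)$ with $Tw_0=y$ and put $x_\lambda:=z-\lambda w_0$; then $Ax_\lambda\equiv y$ while $Tx_\lambda=Tz-\lambda y$, and since $(Tz-\lambda y)/\|Tz-\lambda y\|\to-y/\|y\|$ in norm as $\lambda\to+\infty$, the continuity of $v\mapsto Re[y,v]$ (a consequence of the linearity of the semi-inner product in one variable) forces $Re[Ax_\lambda,Tx_\lambda]/(\|Ax_\lambda\|\,\|Tx_\lambda\|)\to-1$, contradicting $\varphi_T(A)<\pi$. \textsc{Step 2} ($A+tT$ invertible for all $t>0$): a limiting argument using the angle inequality and $\|Tn\|\ge\delta\|n\|$ on $N(A)$ shows that $A+tT$ is bounded below for every $t>0$, hence upper semi-Fredholm for every $t>0$; since the index is locally constant on the upper semi-Fredholm operators, the set $\{t>0:A+tT\text{ invertible}\}$ is open, nonempty and closed in the connected set $(0,\infty)$, so it equals $(0,\infty)$.

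\textsc{Step 3} ($R(A)+N(A)=X$): given $x_0\in X$ (we may assume $x_0\notin R(A)\cup N(A)$), fix $t_1>0$ so small that $q:=4t_1\|T\|/(m(1+c))<1$, and iterate $\rho_{-1}:=x_0$, $u_j:=(A+t_1T)^{-1}\rho_{j-1}$, $N_j\in N(A)$ with $\|u_j-N_j\|\le(2/m)\|Au_j\|$, $\rho_j:=t_1T(u_j-N_j)$, whence $\rho_{j-1}=Au_j+t_1TN_j+\rho_j$. Applying the angle inequality to $u_j$ (here $t_1Tu_j=\rho_{j-1}-Au_j$ is a positive multiple of $Tu_j$), expanding $Au_j=\rho_{j-1}-(\rho_{j-1}-Au_j)$ in the linear variable, and using Cauchy--Schwarz, one obtains $\|Au_j\|\le\frac{2}{1+c}\|\rho_{j-1}\|$; hence $\|\rho_j\|\le q\|\rho_{j-1}\|$ and both $\sum_j\|Au_j\|$ and $\sum_j\|t_1TN_j\|$ are finite, so (as $R(A)$ and $N(A)$ are closed) $r:=\sum_jAu_j\in R(A)$, $n:=\sum_jt_1TN_j\in N(A)$, and telescoping gives $x_0=r+n$. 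Combining Steps 1 and 3 with the closed graph theorem (which makes the algebraic decomposition topological) yields $X=R(A)\oplus N(A)$.

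The hard part is the ``if'' direction, and inside it Step 3: one has to use \emph{simultaneously} the strict inequality $c>-1$ (to produce the contraction $q<1$) and the closed range of $A$ (to estimate $\|u_j-N_j\|$ by $\|Au_j\|$), and all the limiting arguments must be run in an arbitrary Banach space, compactness being replaced by the geometric estimate in Step 3 and by the norm-continuity of the semi-inner product in Steps 1--2; the semi-Fredholm bookkeeping of Step 2 is precisely what upgrades ``bounded below'' to ``invertible''. In the ``only if'' direction the single point needing genuine work is the strict bound $\cos_T(A)>-1$, i.e.\ the positivity of the semi-inner-product minimal angle between the complementary closed subspaces $R(A)$ and $N(A)$.
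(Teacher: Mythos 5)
Your ``if'' direction is sound, and it takes a genuinely different route from the paper. The paper derives $R(A)\cap N(A)=\{0\}$ and closedness of $R(A)+N(A)$ from the angle estimate (Lemmas \ref{estimate} and \ref{closed}), invokes Lemma \ref{descent} and Heuser's Proposition 38.4 to reduce matters to $R(A^2)=R(A)$, and treats the injective case by propagating invertibility of $A+tT$ from $t_0$ down to $t=0$ in uniform steps (Lemma \ref{uniform}), finally applying this to $A|_{R(A)}$. You instead obtain invertibility of $A+tT$ for \emph{all} $t>0$ from a lower bound plus local constancy of the semi-Fredholm index on the connected set $(0,\infty)$, and then produce the decomposition $x_0=r+n$ directly by a geometric-series iteration; I checked your key inequality $\|Au_j\|\le\tfrac{2}{1+c}\|\rho_{j-1}\|$ (it is essentially Lemma \ref{estimate} in disguise) and the contraction factor $q<1$, and the telescoping argument is correct. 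This is more self-contained: it bypasses the descent lemma and the reduction to $A|_{R(A)}$, at the price of the semi-Fredholm machinery. One repair is needed in your Step 1: a semi-inner product is linear in the \emph{first} variable, and $v\mapsto Re[y,v]$ is in general \emph{not} norm-continuous, so the justification you give fails as stated. The conclusion survives by a direct computation: with $w=Tz-\lambda y$, linearity in the first variable and $[w,w]=\|w\|^2$ give $\lambda\,Re[y,w]=Re[Tz,w]-\|w\|^2$, and dividing by $\lambda\|y\|\,\|w\|$ yields a quantity tending to $-1$ as $\lambda\to+\infty$. (Alternatively, Lemma \ref{closed} gets $R(A)\cap N(A)=\{0\}$ by letting $t\to0$ in a norm inequality, avoiding the semi-inner product entirely.)

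The genuine gap is in your ``only if'' direction, exactly at the point you flag as delicate: the claim that $\inf\{Re[y,y+n]/(\|y\|\,\|y+n\|): y\in R(A)\setminus\{0\},\ n\in N(A)\}>-1$, i.e.\ that the projection $P$ onto $R(A)$ along $N(A)$ satisfies $\varphi_I(P)<\pi$. Your justification --- that topological directness makes the ``minimal angle between the subspaces, measured through the semi-inner product,'' positive, and that this bounds the ratio away from $-1$ --- is not an argument in a general Banach space: the equality case of $|Re[u,v]|\le\|u\|\,\|v\|$ is not rigid when the norm is not smooth, so $Re[y,y+n]$ can be close to $-\|y\|\,\|y+n\|$ without $y$ being close to a negative multiple of $y+n$; positivity of the gap between $R(A)$ and $N(A)$ does not by itself give the bound, and your sentence in effect restates what has to be proved. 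The paper does not treat this as obvious either: it is precisely Lemma \ref{projection}, quoted from \cite[Corollary 5.2]{drivyann1}, that every bounded projection has angle less than $\pi$ with respect to $I$, and the ``only if'' implication rests on that result. To complete your proof, either cite that lemma or give an actual proof of the estimate; as written, this direction is incomplete.
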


To get to the proof we start with five preparatory Lemmas.

\begin{lemma}
\label{estimate}
If $\varphi_T(A)<\pi$, then there exists $c>0$ such that $\|Ax+tTx\|\geq c\|Ax\|$, for all $x\notin N(A)$ and all $t\geq 0$.
\end{lemma}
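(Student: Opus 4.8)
The plan is to reduce the hypothesis $\varphi_T(A)<\pi$ to the single scalar fact that $k:=\cos_T(A)>-1$, and then to obtain the estimate from two cheap lower bounds for $\|Ax+tTx\|$ followed by a one-variable minimization. Fix $x\notin N(A)$ and $t\ge 0$. If $x\in N(T)$ then $\|Ax+tTx\|=\|Ax\|$ and there is nothing to prove (any $c\le 1$ works), so assume in addition $x\notin N(T)$. From the definition of $\cos_T(A)$ we have $Re[Ax,Tx]\ge k\,\|Ax\|\,\|Tx\|$. Pairing $Ax+tTx$ with $Tx$, using linearity of $[\cdot,\cdot]$ in the first slot together with the Cauchy--Schwarz inequality $|[u,v]|\le\|u\|\,\|v\|$, I would obtain
\[
\|Ax+tTx\|\,\|Tx\|\ \ge\ Re[Ax+tTx,Tx]\ =\ Re[Ax,Tx]+t\|Tx\|^{2}\ \ge\ k\|Ax\|\,\|Tx\|+t\|Tx\|^{2},
\]
and hence, dividing by $\|Tx\|>0$, the estimate $\|Ax+tTx\|\ge k\|Ax\|+t\|Tx\|$.

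The second, complementary bound is just the reverse triangle inequality, $\|Ax+tTx\|\ge\bigl|\,\|Ax\|-t\|Tx\|\,\bigr|$. Writing $a=\|Ax\|>0$ and $b=t\|Tx\|\ge 0$, the two inequalities say that $\|Ax+tTx\|\ge\max\bigl(ka+b,\,|a-b|\bigr)$. An elementary computation shows that, for $-1<k\le 1$, this maximum, viewed as a function of $b\ge 0$, attains its minimum at $b=\tfrac{1-k}{2}\,a$ with value $\tfrac{1+k}{2}\,a$ (the upper envelope of an increasing and a decreasing line, minimized at their crossover, which lies in the relevant range precisely because $k>-1$). Consequently
\[
\|Ax+tTx\|\ \ge\ \tfrac{1+k}{2}\,\|Ax\|\ =\ \tfrac{1+\cos_T(A)}{2}\,\|Ax\| .
\]
Since $\varphi_T(A)<\pi$ is equivalent to $\cos_T(A)>-1$, the constant $c:=\tfrac12\bigl(1+\cos_T(A)\bigr)$ is strictly positive and independent of $x$ and $t$, which is exactly the claim. (In the degenerate case $N(A)\cup N(T)=X$ one of the two operators must vanish, so either the conclusion is vacuous or $A$ is invertible and the bound is trivial.)

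I expect the only genuine subtlety to be that, since $X$ is merely a Banach space, the semi-inner product need not be additive in its \emph{second} argument, so the Hilbert-space route — expanding $\|Ax+tTx\|^{2}$ and completing the square — is unavailable. The argument above sidesteps this by using linearity only in the first slot (to peel off the $t\|Tx\|^{2}$ term) and by recovering the missing cancellation from the reverse triangle inequality. The price is a weaker constant, $\tfrac12(1+\cos_T(A))$ rather than the Hilbert-space value $\sqrt{1-\cos_T(A)^{2}}$, but positivity is all that Lemma~\ref{estimate} requires, so this loss is harmless for what follows.
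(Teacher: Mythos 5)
Your proof is correct and is essentially the paper's argument in a different packaging: both rest on the same two ingredients, namely first-slot linearity plus Cauchy--Schwarz for the semi-inner product (giving $\|Ax+tTx\|\ge \cos_T(A)\|Ax\|+t\|Tx\|$) and the reverse triangle inequality, and both arrive at the constant $c=\tfrac12\bigl(1+\cos_T(A)\bigr)$. The paper simply adds the two lower bounds so that the $t\|Tx\|$ terms cancel, whereas you take their maximum and minimize over $t$; this is a cosmetic difference, not a different route.
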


\begin{proof}
We have that $\varphi_T(A)<\pi$ implies that there exists $\delta>0$ such that
\[\frac{Re[Ax,Tx]}{\|Ax\|\,\|Tx\|}\geq -1+\delta\,,\]
for all $x\notin N(A)\cup N(T)$ and hence
\[Re[Ax,Tx]+\|Ax\|\,\|Tx\|\geq \delta\,\|Ax\|\,\|Tx\|\,,\]
for all $x\notin N(A)\cup N(T)$\,.

So for $t\geq 0$ and $x\notin N(A)\cup N(T)$ we get that
\begin{align*}
&Re[Ax+tTx,Tx]+\|Ax+tTx\|\,\|Tx\|\geq\\
&\,\,\,\,\,\,\,\,\,\,\,\,\,\,\,\geq Re[Ax,Tx]+t\|Tx\|^2 +(\|Ax\|-t\|Tx\|)\|Tx\|\\
&\,\,\,\,\,\,\,\,\,\,\,\,\,\,\,=Re[Ax,Tx]+\|Ax\|\,\|Tx\|\\
&\,\,\,\,\,\,\,\,\,\,\,\,\,\,\,\geq \delta\|Ax\|\,\|Tx\|\,.
\end{align*}
Thus, for $c=\frac{\delta}{2}$, we get $\|Ax+tTx\|\geq c\|Ax\|$, for all $x\notin N(A)\cup N(T)$ and $t\geq 0$. Since this inequality obviously holds if $x\in N(T)$ the proof is complete.
\end{proof}

\begin{lemma}
\label{uniform}
Assume $A\in B(X)$ has closed range and $\varphi_T(A)<\pi$, for some $T\in B(X)$. If $N(A)=\left\{0\right\}$ and $A+t_0T$ is invertible, for some $t_0>0$, then $A+tT$ is invertible, for any $|t-t_0|<\mu$, where $\mu>0$ is independent of $t_0$.
\end{lemma}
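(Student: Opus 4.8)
The plan is to reduce everything to a Neumann--series perturbation argument in which the norm of the inverse of $A+t_0T$ is controlled by a constant that does not depend on $t_0$. First I would record that, since $R(A)$ is closed and $N(A)=\{0\}$, the operator $A$ is bounded below: applying the bounded inverse theorem to $A\colon X\to R(A)$ produces an $m>0$ with $\|Ax\|\ge m\|x\|$ for all $x\in X$.

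Next I would feed the hypothesis $\varphi_T(A)<\pi$ into Lemma~\ref{estimate}, which yields a $c>0$ with $\|Ax+tTx\|\ge c\|Ax\|$ for every $x\neq 0$ and every $t\ge 0$ (the inequality being trivial when $x=0$). Combining the two estimates gives the uniform lower bound
\[\|(A+tT)x\|\ge cm\,\|x\|\qquad(x\in X,\ t\ge 0)\,.\]
The crucial point is that the constant $c$ depends only on the number $\delta$ extracted from $\varphi_T(A)<\pi$, and $m$ depends only on $A$, so $cm$ is independent of $t_0$.

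In particular $A+t_0T$ is bounded below by $cm$, and since it is assumed invertible this forces $\|(A+t_0T)^{-1}\|\le \tfrac{1}{cm}$. Writing $A+tT=(A+t_0T)\bigl(I+(t-t_0)(A+t_0T)^{-1}T\bigr)$ and expanding the second factor as a Neumann series, $A+tT$ is invertible as soon as $|t-t_0|\,\|(A+t_0T)^{-1}\|\,\|T\|<1$, which is guaranteed whenever $|t-t_0|<\mu$ with $\mu:=\tfrac{cm}{\|T\|}$ (if $T=0$ the statement is vacuous, as $A$ itself is then invertible). Since $\mu$ involves only $c$, $m$ and $\|T\|$, it does not depend on $t_0$, which is exactly the assertion.

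I expect the only genuinely pointed step to be the observation that invertibility together with the $t$-uniform lower bound forces a $t$-uniform bound on $\|(A+t_0T)^{-1}\|$; once that is in hand, the remainder is routine open-mapping and Neumann-series bookkeeping.
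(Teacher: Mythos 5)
Your proof is correct and follows essentially the same route as the paper: both combine the bound $\|Ax\|\ge k_A\|x\|$ (your $m$) coming from closed range and trivial kernel with the estimate of Lemma \ref{estimate} to get the $t_0$-independent lower bound $\|(A+tT)x\|\ge c\,m\,\|x\|$, hence a uniform bound $\|(A+t_0T)^{-1}\|\le \tfrac{1}{cm}$, and then perturb $A+t_0T$ on a ball of fixed radius, you via the Neumann series and the paper via the equivalent contraction-mapping fixed point. The only discrepancy is the value of the radius: your $\mu=\tfrac{cm}{\|T\|}$ carries the factor $\tfrac{1}{\|T\|}$ which the paper's stated $\mu=c\,k_A$ omits (in the paper's fixed-point map the perturbation term should read $(t_0-t)Th$, which reintroduces exactly this factor), and since your $\mu$ is still independent of $t_0$ it serves the proof of Theorem \ref{main} equally well.
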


\begin{proof}
Since the range of $A$ is closed, there exists $k_A>0$ such that
\[\|Ax\|\geq k_A\dist(x,N(A))\]
and thus, since $N(A)=\left\{0\right\}$,
\[\|Ax\|\geq k_A\|x\|\,,\text{ for all }x\in X\,.\]
Let $\mu=c\,k_A>0$, where $c$ is the constant from Lemma \ref{estimate}. Since $N(A)=\left\{0\right\}$, we get that
\begin{equation}
\label{lowerbound}
\|Ax+tTx\|\geq \mu\|x\|\,,\text{ for all }x\in X \text{ and }t\geq 0\,.
\end{equation}
So $A+tT$ is injective for all $t\geq 0$\,.

Hence it is enough to show that if  $|t-t_0|<\mu $ then $A+tT$ is also onto. So fix such a $t$ and let $L=\frac{|t-t_0|}{\mu}<1$. For $y$ in $X$, define $F:X\rightarrow X$ by
\[Fh=(A+t_0T)^{-1}\left(y+(t_0-t)h\right),\text{ for every }h\in X\,.\]
Then, using (\ref{lowerbound}), we have
\begin{align*}
\|Fh_1-Fh_2\|&=\|(A+t_0T)^{-1}\left(y+(t_0-t)h_1\right)-(A+t_0T)^{-1}\left(y+(t_0-t)h_2\right)\|\\
&\leq\frac{1}{\mu}\,\|(t_0-t)(h_1-h_2)\|\\
&=L\,\|h_1-h_2\|\,.
\end{align*}
Hence $F$ is a contraction and thus there exists a unique $x\in X$ such that $Fx=x$, which implies that $(A+t_0T)x=y+(t_0-t)x$. Hence $(A+tT)x=y$ and so $A+tT$ is onto as was required.
\end{proof}

Next we show that if some angle of $A$ is less than $\pi$, then the sum of $R(A)$ and $N(A)$ is both closed and direct.

\begin{lemma}
\label{closed}
Assume $\varphi_T(A)<\pi$, for some $T\in \mathcal{E}_A$. Then $R(A)+N(A)$ is closed and $R(A)\cap N(A)=\left\{0\right\}$.
\end{lemma}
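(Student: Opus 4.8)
The plan is to distill Lemma~\ref{estimate} into a single clean estimate, namely that there is a constant $c>0$ with
\[
\|Ax+u\|\geq c\|Ax\|\qquad\text{for all }x\in X\text{ and all }u\in N(A),
\]
and then to read off both assertions of the lemma from it. Interpreting the estimate as $\dist(Ax,N(A))\geq c\|Ax\|$, any $v\in R(A)\cap N(A)$ satisfies $0=\dist(v,N(A))\geq c\|v\|$, forcing $v=0$. For closedness, if $Ax_n+u_n\to y$ with $u_n\in N(A)$, applying the estimate to $x_n-x_m$ and $u_n-u_m$ gives $\|Ax_n-Ax_m\|\leq c^{-1}\|(Ax_n+u_n)-(Ax_m+u_m)\|$, so $(Ax_n)$ is Cauchy; since $R(A)$ is closed it converges to some $v\in R(A)$, whence $u_n\to y-v\in N(A)$ and $y=v+(y-v)\in R(A)+N(A)$.

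The substantive part is therefore the displayed inequality. Lemma~\ref{estimate} furnishes $c>0$ with $\|Ax+tTx\|\geq c\|Ax\|$ for every $x\notin N(A)$ and every $t\geq 0$, so the task is to absorb an \emph{arbitrary} $u\in N(A)$ into the term $tTx$. Here is where the assumption $T\in\mathcal E_A$ enters, through the condition $T(N(A))=N(A)$: choose $w\in N(A)$ with $Tw=u$, fix $x\notin N(A)$, and for each $t>0$ apply Lemma~\ref{estimate} to the vector $\eta_t=x+t^{-1}w$. Since $w\in N(A)$ we have $A\eta_t=Ax\neq 0$, so $\eta_t\notin N(A)$ and the lemma applies, giving $\|A\eta_t+tT\eta_t\|\geq c\|A\eta_t\|$; but $A\eta_t+tT\eta_t=Ax+tTx+Tw=Ax+tTx+u$ while $A\eta_t=Ax$, so $\|Ax+tTx+u\|\geq c\|Ax\|$ for every $t>0$. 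Letting $t\to 0^{+}$ — the left-hand side is continuous in $t$ and $tTx\to 0$ — yields $\|Ax+u\|\geq c\|Ax\|$, and the inequality is trivial when $x\in N(A)$.

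I expect the only real obstacle to be finding this substitution $\eta_t=x+t^{-1}w$ coupled with the limit $t\to 0^{+}$: the point is that scaling $w$ up by $t^{-1}$ makes Lemma~\ref{estimate}'s \emph{own} parameter $t$ reproduce exactly $u=tT(t^{-1}w)$ while leaving $A\eta_t$ equal to $Ax$, and sending $t\to 0^{+}$ then discards the spurious $tTx$. Two remarks worth recording in the write-up: this argument uses only $T(N(A))=N(A)$ and not the existence of an invertible $A+tT$; and the closedness of $R(A)$ is needed solely for the final completeness step, not for $R(A)\cap N(A)=\{0\}$.
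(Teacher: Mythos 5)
Your proposal is correct and is essentially the paper's own argument: the substitution $x+\tfrac{1}{t}w$ with $w\in N(A)$, the identity $A\eta_t+tT\eta_t=Ax+tTx+Tw$, and the limit $t\to 0^{+}$ to obtain $\|Ax+u\|\geq c\|Ax\|$ are exactly the steps in the paper, and both conclusions are read off from that inequality in the same way. Your two closing remarks (only $T(N(A))=N(A)$ is used, and closedness of $R(A)$ enters only in the completeness step) are accurate and make explicit what the paper leaves implicit.
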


\begin{proof}
If $x\notin N(A)$, $y\in N(A)$ and $t>0$, from Lemma \ref{estimate} we get that
\[\left\|A\left(x+\frac{1}{t}\,y\right)+tT\left(x+\frac{1}{t}\,y\right)\right\|\geq c\|Ax\|\]
and thus
\[\|Ax+tTx+Ty\|\geq c\|Ax\|\,.\]
Letting $t\rightarrow 0$ we have that
\[\|Ax+Ty\|\geq c\|Ax\|\,.\]
Since $T(N(A))=N(A)$, we get that the sum $R(A)+N(A)$ is closed.

To conclude note that if $Ax\in N(A)$, then from the above inequality $Ax=0$ and so $R(A)\cap N(A)=\left\{0\right\}$.
\end{proof}

The final two results that we will need for the proof of Theorem \ref{main} are the following:

\begin{lemma}
\label{descent}{\cite[Corollary p. 129]{mbehkta}}
If
\[R(A)\cap N(A)=\left\{0\right\}\,,\]
then $R(A)+N(A)$ is closed if and only if $R(A^2)$ is closed.
\end{lemma}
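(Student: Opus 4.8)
The plan is to prove the two implications separately; the forward direction is essentially formal, and the reverse direction carries the content. For ``$R(A^2)$ closed $\Rightarrow R(A)+N(A)$ closed'' I would first record the algebraic identity
\[ R(A)+N(A)=A^{-1}\bigl(R(A^2)\bigr), \]
valid for every bounded operator: if $x=Az+n$ with $n\in N(A)$ then $Ax=A^2z$, and conversely $Ax=A^2z$ forces $x-Az\in N(A)$. Then $R(A)+N(A)$ is the preimage of a closed set under the continuous map $A$, hence closed; this half does not even use $R(A)\cap N(A)=\{0\}$.

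For the converse I would pass to the quotient $X/N(A)$. First, the hypothesis gives $N(A^2)=N(A)$: if $A^2x=0$ then $Ax\in R(A)\cap N(A)=\{0\}$. Let $q:X\to X/N(A)$ be the quotient map and $\bar A:X/N(A)\to X/N(A)$ the operator induced by $A$ (well defined since $A(N(A))=\{0\}$). Two observations are key. First, $\bar A$ is injective: if $\bar A(x+N(A))=0$ then $Ax\in N(A)$, and since also $Ax\in R(A)$ the hypothesis forces $Ax=0$. Second, $R(\bar A)=q(R(A))=\bigl(R(A)+N(A)\bigr)/N(A)$, because $q(R(A))=q\bigl(R(A)+N(A)\bigr)$ and $N(A)\subseteq R(A)+N(A)$; this subspace is closed in $X/N(A)$ precisely because $R(A)+N(A)$ is closed in $X$. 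So $\bar A$ is an injective bounded operator with closed range, hence, by the open mapping theorem, bounded below: there is $c>0$ with $\|\bar A\xi\|\ge c\|\xi\|$ for all $\xi\in X/N(A)$.

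To finish I would iterate this bound. Since $\bar A^2(x+N(A))=A^2x+N(A)$ and $\bar A^2$ is bounded below with constant $c^2$, for every $x\in X$
\[ \|A^2x\|\ge\|q(A^2x)\|=\|\bar A^2(x+N(A))\|\ge c^2\,\|x+N(A)\|=c^2\,\dist\bigl(x,N(A^2)\bigr), \]
using $N(A^2)=N(A)$ at the last step; a lower bound of this form relative to the kernel is the standard criterion for $R(A^2)$ to be closed. The one real subtlety I expect lies in the reverse direction: spotting that the intersection condition is precisely what makes $\bar A$ injective and that the right object to track is $R(\bar A)=\bigl(R(A)+N(A)\bigr)/N(A)$, and then keeping the original norm and the quotient norm straight — which is harmless here because $q$ is norm-decreasing, and that is exactly what lets the quotient estimate be promoted to an estimate for $A^2$ itself.
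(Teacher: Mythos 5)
Your argument is correct and complete, and it is worth noting that the paper itself does not prove this lemma at all: it is quoted verbatim from Laursen--Mbekhta \cite[Corollary p.~129]{mbehkta}, so you have supplied a proof where the authors supply only a citation. Both halves check out. The identity $R(A)+N(A)=A^{-1}\bigl(R(A^2)\bigr)$ is exactly right and gives the easy implication for free, without the intersection hypothesis. For the converse, the hypothesis $R(A)\cap N(A)=\{0\}$ is used in precisely the two places where it must be: to get $N(A^2)=N(A)$ and to make the induced operator $\bar A$ on $X/N(A)$ injective; combined with $R(\bar A)=\bigl(R(A)+N(A)\bigr)/N(A)$ being closed (which holds because this subspace contains $N(A)$, so its preimage under the quotient map is $R(A)+N(A)$ itself), the open mapping theorem gives the lower bound $\|\bar A\xi\|\geq c\|\xi\|$, and squaring it yields $\|A^2x\|\geq c^2\dist\bigl(x,N(A^2)\bigr)$, which is the standard reduced-minimum-modulus criterion for $R(A^2)$ to be closed. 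This is essentially the classical Kato/Mbekhta-style argument (ascent $\leq 1$ plus closedness of $R(A)+N(A)$ controls $R(A^2)$), so while I cannot compare it line by line with the cited source, it is a legitimate, self-contained proof of the statement as used in the paper.
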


\begin{lemma}{\cite[Corollary 5.2]{drivyann1}}
\label{projection}
If $P$ is a projection, then $\varphi_I(P)<\pi$.
\end{lemma}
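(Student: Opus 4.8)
The plan is to prove the equivalent inequality $\cos_I(P)>-1$. Since $\cos_I(P)\in[-1,1]$ and $\arccos$ is strictly decreasing, this yields $\varphi_I(P)=\arccos(\cos_I(P))<\pi$. The cases $P=0$ (nothing to prove) and $P=I$ (where $\cos_I(P)=1$, so $\varphi_I(P)=0$) are immediate, so assume $0\neq P\neq I$.

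I would first record the two features of a bounded idempotent that the argument needs. Since $P$ is a bounded projection, $X=R(P)\oplus N(P)$ with both summands closed, and for every $v\in N(P)$ one has $\|Px\|=\|P(Px-v)\|\le\|P\|\,\|Px-v\|$, so
\[\dist\!\bigl(Px,\,N(P)\bigr)\;\ge\;\frac{\|Px\|}{\|P\|}\qquad(x\in X);\]
thus $R(P)$ and $N(P)$ are \emph{uniformly} separated. Moreover $\sigma(P)\subseteq\{0,1\}$, so $P+tI$ is invertible for every $t>0$ and $(P+tI)^{-1}=\tfrac1t I-\tfrac{1}{t(1+t)}P$ stays uniformly bounded as $t$ ranges over any compact subset of $(0,\infty)$.

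Next, using only linearity of the semi-inner product in its first argument together with $x=Px+(I-P)x$, I get for $x\notin N(P)$ that $[Px,x]=[x,x]-[(I-P)x,x]=\|x\|^2-[(I-P)x,x]$, whence, by the Cauchy--Schwarz inequality for semi-inner products,
\[\frac{Re[Px,x]}{\|Px\|\,\|x\|}\;\ge\;\frac{\|x\|-\|(I-P)x\|}{\|Px\|}\;\ge\;-1 .\]
So $\cos_I(P)\ge-1$ comes for free; the whole content is to upgrade this to a strict inequality with a gap that is uniform in $x$. I would do that by contradiction: suppose $\cos_I(P)=-1$ and pick $x_n$ with $\|x_n\|=1$, $x_n\notin N(P)$, and $Re[Px_n,x_n]/\|Px_n\|\to-1$. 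Setting $p_n=\|Px_n\|\in(0,\|P\|]$, the bound $|[Px_n,x_n]|\le p_n$ together with $Re[Px_n,x_n]/p_n\to-1$ forces $[Px_n,x_n]=-p_n+o(p_n)$; feeding this back through the displayed inequalities shows $\|x_n-Px_n\|=1+p_n-o(p_n)$, i.e. the triangle inequality $\|x_n-Px_n\|\le\|x_n\|+\|Px_n\|$ is asymptotically sharp. Asymptotic sharpness here should force $Px_n$ to be asymptotically a negative multiple of $x_n$. Passing to a subsequence with $p_n\to p\in[0,\|P\|]$: if $p>0$ then $\|(P+p_nI)x_n\|\to0$, which contradicts $\|(P+p_nI)x_n\|\ge\|x_n\|/\sup_n\|(P+p_nI)^{-1}\|>0$; if $p=0$ one instead rescales $Px_n$ to a unit vector $y_n=Px_n/p_n\in R(P)$ and checks that $(I-P)x_n$, normalized, gives a unit vector $z_n\in N(P)$ with $[y_n,z_n]\to-1$, forcing $y_n+z_n\to0$ and contradicting the uniform separation of $R(P)$ and $N(P)$.

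The step I expect to be the genuine obstacle is precisely ``asymptotic sharpness of the triangle inequality $\Rightarrow$ asymptotic collinearity of $Px_n$ and $x_n$''. Because the semi-inner product is linear only in its first slot, one cannot simply expand $[Px_n,x_n]$ along $x_n=Px_n+(I-P)x_n$, and some real geometric input about the norm of $X$ must enter at this point (in a Hilbert space it is a one-line computation with the parallelogram identity; in general this is where I expect the argument behind \cite[Corollary 5.2]{drivyann1} to do its work). Everything else --- the reductions, the two structural facts about bounded idempotents, and the bookkeeping in the contradiction --- is routine.
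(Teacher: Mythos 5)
First, a point of reference: the paper does not prove this lemma at all --- it imports it verbatim from \cite[Corollary 5.2]{drivyann1} --- so there is no internal argument to compare yours with, and I can only judge the attempt on its own terms. As a proof it is incomplete, and you say so yourself: everything hinges on the step ``asymptotic sharpness of the triangle inequality forces $Px_n$ to be asymptotically a negative multiple of $x_n$'', which you leave open. Unfortunately this is not a technical nuisance that the cited reference will supply; it fails in any non-strictly-convex space, and in fact the inequality $\cos_I(P)>-1$ to which you reduce the lemma is itself false under the paper's literal semi-inner-product definition. Take $X=\ell^1(\{1,2\})$, $P(x_1,x_2)=(x_1-x_2,0)$ (a bounded, indeed norm-one, projection onto $\mathrm{span}\{(1,0)\}$ along $\mathrm{span}\{(1,1)\}$), and $x=(1,2)$. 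Both coordinates of $x$ are nonzero, so the norming functional at $x$ is unique, namely $f_x=(1,1)$, and \emph{every} compatible semi-inner product satisfies $[y,x]=\|x\|_1 f_x(y)$. Then $Px=(-1,0)$, $\|Px\|=1$, $\|x\|=3$, and
\[[Px,x]=3\cdot f_x\bigl((-1,0)\bigr)=-3=-\|Px\|\,\|x\|\,,\]
so $\cos_I(P)=-1$ and $\varphi_I(P)=\pi$. Hence no amount of work on your ``obstacle'' can close the gap: the target inequality does not hold in general Banach spaces. (Your outline does go through in Hilbert space, where the inner product is additive in the second slot and your uniform-separation estimate $\dist(Px,N(P))\geq\|Px\|/\|P\|$ yields $Re\langle Px,x\rangle\geq\|Px\|^2-(1-\varepsilon)\|Px\|\,\|(I-P)x\|$ and a cosine bounded away from $-1$; but that is exactly the computation you flag as Hilbert-specific.)

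What is true for every bounded projection, and what Theorem \ref{main} actually consumes downstream, is the \emph{conclusion} of Lemma \ref{estimate} rather than its hypothesis: $\|Px+tx\|\geq(1+\|I-P\|)^{-1}\|Px\|$ for all $t\geq 0$ and all $x$. Indeed $(P+tI)x=(1+t)Px+t(I-P)x$; applying $I-P$ gives $t\|(I-P)x\|\leq\|I-P\|\,\|(P+tI)x\|$, and then $(1+t)Px=(P+tI)x-t(I-P)x$ yields the bound. This norm-estimate formulation (an ``angle along a curve'' in the sense of \cite{drivyann1}) is presumably what Corollary 5.2 there establishes, and it is where your correct observation about the uniform separation of $R(P)$ and $N(P)$ really belongs. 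So: your reductions and the two structural facts about bounded idempotents are fine, but the pivotal step is missing, cannot be supplied in general, and $\cos_I(P)>-1$ is the wrong target.
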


We may now proceed with the proof of our main result.

\begin{proof}[Proof of Theorem \ref{main}]
Assume that $X=R(A)\oplus N(A)$. Then, by \cite[Theorem 3]{mbehkta}, there exist an invertible operator $S$ and a projection $P$ such that $A$ can be factorized as $A=SP$. In particular $S=A|_{R(A)}\oplus I_{N(A)}$ with respect to $X=R(A)\oplus N(A)$ and $P$ is the projection onto $R(A)$ parallel to $N(A)$. Hence $S(N(A))=N(A)$ and $SP=PS$ and so, by Lemma \ref{projection}, we get that
\[\varphi_S(A)=\varphi_I(P)<\pi\,.\]
Thus $\Am(A)<\pi$.

For the converse if $\Am(A)<\pi$, then there exists $T\in\mathcal{E}_A$ such that $\varphi_T(A)<\pi$.

We first assume that $N(A)=\left\{0\right\}$. Since $T\in\mathcal{E}_A$ we have that $A+t_0T$ is invertible for some $t_0>0$ and hence, by Lemma \ref{uniform}, we get that so is $A+tT$, for $|t-t_0|<\mu$. Since $\mu>0$ is independent of $t_0$, continuing this way we get that $A$ is invertible and hence we have range-kernel complementarity.

If $N(A)\neq\left\{0\right\}$ and since by Lemma \ref{closed} we have that $R(A)\cap N(A)=\left\{0\right\}$, in order to conclude (see \cite[Proposition 38.4]{heuser}) it is enough to show that $R(A^2)=R(A)$. By Lemma \ref{descent} we have that $R(A^2)$ is closed and hence if
\[R\left(A|_{R(A)}\right)=R(A^2)\neq R(A)\,,\]
then $A|_{R(A)}$ has trivial kernel, closed range but is not invertible. Hence by the first part of the proof
\[\varphi_T(A|_{R(A)})=\pi\,.\]
But $\varphi_T(A|_{R(A)})\leq \varphi_T(A)$ and thus we have a contradiction. Hence
$R(A^2)=R(A)$ and the proof is complete.
\end{proof}

\begin{remark}
If the operator $T$ for which $\varphi_T(A)<\pi$ is invertible, then things are much simpler. In particular then $\varphi(T^{-1}A)<\pi$ and hence, by \cite[Theorem 3.4]{drivyann}, we have that
\[X=R(T^{-1}A)\oplus N(T^{-1}A)\,.\]
Therefore by \cite[Theorem 3]{mbehkta} there exist an invertible operator $S$ and a projection $P$ such that $T^{-1}A=SP$ and thus $A=TSP$ which, again by \cite[Theorem 3]{mbehkta}, gives
\[X=R(A)\oplus N(A)\]
as required.
\end{remark}

If $0$ is an isolated point of the spectrum of $A$, then with respect to the Riesz decomposition $A$ is of the form ``invertible$\,\oplus$ quasinilpotent'' . Since range-kernel complementarity is equivalent to the ``quasinilpotent part'' of $A$ being $0$ it is quite expected that quasinilpotent operators with closed range should have amplitude equal to $\pi$.

\begin{corollary}
\label{quasinilpotent}
Let $X$ be a Banach space and $Q$ be a quasinilpotent operator with closed range. Then $\Am(Q)=\pi$.
\end{corollary}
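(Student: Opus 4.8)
The plan is to obtain $\Am(Q)\le\pi$ directly from the definitions and, for the reverse inequality, to use Theorem~\ref{main} to convert $\Am(Q)<\pi$ into range-kernel complementarity, which for a nonzero quasinilpotent operator is contradicted by the nonemptiness of the spectrum. We may assume $Q\ne 0$, since otherwise $X=R(Q)\oplus N(Q)$ holds trivially. For the easy inequality: because $\sigma(Q)=\{0\}$, the operator $Q+tI$ is invertible for every $t>0$, and since $I(N(Q))=N(Q)$ this gives $I\in\mathcal{E}_Q$; hence $\Am(Q)\le\varphi_I(Q)\le\pi$, the last bound holding because $\cos_I(Q)$ is an infimum of numbers in $[-1,1]$ over the nonempty set $\{x\notin N(Q)\}$. (Alternatively, $\Am(Q)\le\am(Q)\le\pi$.)

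Now suppose, for contradiction, that $\Am(Q)<\pi$. By Theorem~\ref{main} we get $X=R(Q)\oplus N(Q)$, a topological direct sum as both summands are closed. Both are $Q$-invariant, so with respect to this decomposition $Q=Q_1\oplus 0$, where $Q_1=Q|_{R(Q)}\in B(R(Q))$. Here $R(Q_1)=Q(R(Q))=Q(R(Q)+N(Q))=Q(X)=R(Q)$, so $Q_1$ is onto, and $N(Q_1)=R(Q)\cap N(Q)=\{0\}$, so $Q_1$ is one-to-one; by the bounded inverse theorem $Q_1$ is invertible, hence $0\notin\sigma(Q_1)$.

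On the other hand, from the block-diagonal form $\lambda I-Q=(\lambda I_{R(Q)}-Q_1)\oplus\lambda I_{N(Q)}$, invertibility of $\lambda I-Q$ forces that of $\lambda I_{R(Q)}-Q_1$, so $\sigma(Q_1)\subseteq\sigma(Q)=\{0\}$. Together with $0\notin\sigma(Q_1)$ this forces $\sigma(Q_1)=\emptyset$, which is impossible since $R(Q)\ne\{0\}$ (because $Q\ne 0$) is a complex Banach space. This contradiction shows $\Am(Q)\ge\pi$, so $\Am(Q)=\pi$. No serious obstacle is expected: the substantive step is recognizing that range-kernel complementarity makes $Q$ block diagonal with an invertible first block, and the only care required is the bookkeeping ensuring that $R(Q)$ is a nonzero space on which $Q_1$ has nonempty spectrum.
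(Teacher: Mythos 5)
Your proof is correct, and its skeleton matches the paper's: assume $\Am(Q)<\pi$, apply Theorem~\ref{main} to get $X=R(Q)\oplus N(Q)$, and derive a contradiction. The difference lies in how the contradiction is reached. The paper disposes of it in one line by citing Grabiner's theorem that the range chain of a quasinilpotent operator does not terminate (complementarity forces $R(Q^2)=R(Q)$). You instead give a self-contained spectral argument: the decomposition makes $Q=Q_1\oplus 0$ with $Q_1=Q|_{R(Q)}$ bijective on the closed subspace $R(Q)$, hence invertible, so $0\notin\sigma(Q_1)$; but the block-diagonal form gives $\sigma(Q_1)\subseteq\sigma(Q)=\{0\}$, forcing $\sigma(Q_1)=\emptyset$, which is impossible on a nonzero complex Banach space. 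This buys independence from the external reference at the modest cost of checking the block-diagonal spectral inclusion, and your explicit verification that $I\in\mathcal{E}_Q$ (so that $\Am(Q)\le\pi$ is not vacuously about an empty class) is a point the paper leaves implicit. One quibble: your dismissal of $Q=0$ is phrased backwards --- if $Q=0$ then $X=R(Q)\oplus N(Q)$ does hold, so Theorem~\ref{main} would give $\Am(Q)<\pi$ and the corollary as stated must simply be read as assuming $Q\neq 0$ (a degenerate case the paper also ignores); the observation that complementarity ``holds trivially'' is why the statement degenerates there, not a reason the case is harmless.
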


\begin{proof}
If $\Am(Q)<\pi$ then by Theorem \ref{main}
\[X=R(Q)\oplus N(Q)\]
which is impossible since by \cite{grabiner} the range chain of a quasinilpotent operator does not terminate.
\end{proof}

\begin{remark}
As $\Am(Q)\leq \am(Q)$, for quasinilpotent operators with closed range the generalized amplitude is equal to Krein's. Moreover since $\sigma(Q)$ does not separate 0 from $\infty$ by \cite[Theorem 4.1]{drivyann1} the angle of $Q$ along any curve is also equal to $\pi$.
\end{remark}

\section{An application}

In this section we present a simple application of our main result to the problem of strong convergence of the iterates $T^n$ of a bounded linear operator $T$ defined on a Banach space $X$. Recall that $T$ is called asymptotically regular if, for all $x\in X$,
\[(T^n-T^{n+1})x\rightarrow 0,\text{ as }n\rightarrow +\infty\,.\]

\begin{remark}
If $A$ is nilpotent then it is asymptotically regular and hence the asymptotic regularity of $A$ does not imply that $\Am(A)<\pi$.
\end{remark}

The proof of our result follows the idea of \cite[Theorem 4.1]{badea}.

\begin{theorem}
\label{alt}
Let $X$ be a complex Banach space and $T\in B(X)$ be an asymptotically regular operator, such that $R(I-T)$ is closed. The sequence of iterates $T^nx$ converges, for all $x\in X$, as $n\rightarrow +\infty$ if and only if $\Am(I-T)<\pi$.
\end{theorem}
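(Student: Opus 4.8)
The plan is to reduce Theorem~\ref{alt} to Theorem~\ref{main} applied to the operator $A=I-T$, exploiting that range-kernel complementarity for $I-T$ is the natural fixed-point/mean-ergodic-type decomposition $X=R(I-T)\oplus N(I-T)$. Write $A=I-T$, so $N(A)$ is the fixed-point space of $T$ and, by hypothesis, $R(A)$ is closed. The key structural observation, following \cite[Theorem 4.1]{badea}, is that for an asymptotically regular $T$ the convergence of $T^n x$ for every $x$ is equivalent to the decomposition $X=R(I-T)\oplus N(I-T)$. Indeed, on $N(A)$ we have $T^n x=x$, so convergence is automatic; and on $R(A)$, if $x=(I-T)z$, then $T^n x=T^n z-T^{n+1}z=(T^n-T^{n+1})z\to 0$ by asymptotic regularity, so $T^n$ converges to $0$ on $R(A)$. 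Hence if $X=R(A)\oplus N(A)$, then $T^n$ converges everywhere (to the projection onto $N(A)$ along $R(A)$).

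For the converse direction within this reduction, suppose $T^n x$ converges for all $x$; one must show $X=R(A)\oplus N(A)$. If $x\in R(A)\cap N(A)$, then $x\in N(A)$ gives $T^n x=x$, while $x\in R(A)$ gives $T^n x\to 0$, so $x=0$: the sum is direct. For $R(A)+N(A)=X$, given $x\in X$ let $y=\lim_n T^n x$. Asymptotic regularity forces $Ty=y$, i.e.\ $y\in N(A)$. It remains to see that $x-y\in R(A)=\overline{R(A)}$; this is the standard ergodic argument — one shows $x-y$ lies in the closure of $R(A)$ using that $\frac1n\sum_{k=0}^{n-1}T^k(x-y)\to 0$ (which follows from $T^k(x-y)\to 0$), together with the identity $x-\frac1n\sum_{k=0}^{n-1}T^k x=\frac1n\sum_{k=0}^{n-1}(I-T^k)x\in R(I-T)$, and then invokes closedness of $R(A)$. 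Thus $X=R(A)\oplus N(A)$.

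Having established that the iterates converge for all $x$ if and only if $X=R(I-T)\oplus N(I-T)$, Theorem~\ref{main} applied to $A=I-T$ (which has closed range by hypothesis) immediately yields that this decomposition holds if and only if $\Am(I-T)<\pi$, completing the proof.

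I expect the main obstacle to be the ergodic step showing $x-y\in R(I-T)$ — i.e.\ passing from $T^k(x-y)\to 0$ to membership in the closed range $R(I-T)$. One must be careful that asymptotic regularity alone (without a uniform power bound) still delivers $\frac1n\sum_{k=0}^{n-1}T^k z\to 0$ when $T^k z\to 0$; this is just Cesàro convergence of a null sequence, so it is fine, but one should state it cleanly. The rest is bookkeeping and a direct appeal to Theorem~\ref{main}.
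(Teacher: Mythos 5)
Your proposal is correct and follows the same overall route as the paper: both directions are reduced to Theorem~\ref{main} via the equivalence, for an asymptotically regular $T$ with $R(I-T)$ closed, between strong convergence of $T^n$ and the decomposition $X=R(I-T)\oplus N(I-T)$, and your forward direction ($x=y+z$, $T^ny=y$, $T^n(I-T)z=(T^n-T^{n+1})z\to 0$) is exactly the paper's. The one difference is in the converse: the paper simply cites Halperin's theorem for the implication ``strong convergence of $T^n$ implies $X=R(I-T)\oplus N(I-T)$,'' whereas you prove it in-line with the standard ergodic argument (directness from $T^nx=x$ versus $T^nx\to 0$ on the intersection; $y=\lim_nT^nx$ is a fixed point; and $x-\frac1n\sum_{k=0}^{n-1}T^kx\in R(I-T)$ converges to $x-y$, which lies in $R(I-T)$ by closedness). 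Your Ces\`aro step is sound --- it is just Ces\`aro convergence of a convergent sequence, no power-boundedness needed --- so your version is self-contained at the cost of a paragraph, while the paper's is shorter by outsourcing this to \cite{halperin}.
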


\begin{proof}
Since $R(I-T)$ is closed if $\Am(I-T)<\pi$ we have from Theorem \ref{main} that
\[X=R(I-T)\oplus N(I-T)\,.\]
Moreover by the fact that $T$ is asymptotically regular we have that $T^nz\rightarrow 0$, as $n\rightarrow +\infty$, for all $z\in R(I-T)$. Hence if $x\in X$, then $x=y+z$ with $y\in N(I-T)$ and $z\in R(I-T)$ and so
\[T^nx=T^ny+T^nz\rightarrow y\text{ as }n\rightarrow +\infty\,.\]

The converse follows again from Theorem \ref{main} since the strong convergence of $T^n$ implies by \cite[Theorem 3]{halperin} that $X=R(I-T)\oplus N(I-T)$.
\end{proof}

\begin{remark}
(a) It may be easily seen from the above proof that $T^n$ converges strongly to the projection onto $R(I-T)$ parallel to $N(I-T)$.\\
(b) Note that, by \cite[Theorem 4.1]{badea}, $R(I-T)$ is closed if and only if the convergence of $T^n$ is uniform.\\
(c) The connection between range-kernel complementarity and the strong convergence of the iterates $T^n$ was used in the well-known paper of Halperin \cite{halperin} in order to prove the convergence of the iterates of $T=P_1P_2...P_n$, for $P_1,...,P_n$ orthogonal projections on a Hilbert space. This was later generalized for norm-one projections on a Banach space by Bruck and Reich in \cite[Theorem 2.1]{reich} and by Badea and Lyubich in \cite[Main Theorem, p. 25]{badea}, with suitable additional conditions .
\end{remark}

Recall that we say that a contraction $T$ is primitive if $T$ satisfies the Katznelson-Tzafriri spectral condition, i.e.\ if
\[\sigma(T)\subseteq\left\{1\right\}\cup\left\{t\in \mathbb C: |t|<1\right\}\,.\]
We have the following Corollary of Theorem \ref{alt} for primitive contractions on a reflexive Banach space.

\begin{corollary}
\label{primitive}
If $X$ is a reflexive Banach space and $T$ is a primitive contraction with $R(I-T)$ closed, then $\Am(I-T)<\pi$.
\end{corollary}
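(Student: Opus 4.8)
The plan is to deduce the Corollary from Theorem~\ref{alt} by checking its two standing assumptions and its convergence conclusion. First I would note that a primitive contraction $T$ is, being a contraction, power bounded with $\|T^n\|\le 1$, and that the Katznelson--Tzafriri spectral condition says precisely that $\sigma(T)\cap\{z\in\mathbb C:|z|=1\}\subseteq\{1\}$. By the Katznelson--Tzafriri theorem this forces $\|T^n(I-T)\|\to 0$ as $n\to+\infty$; in particular $(T^n-T^{n+1})x\to 0$ for every $x\in X$, so $T$ is asymptotically regular. Together with the hypothesis that $R(I-T)$ is closed, $T$ therefore satisfies the hypotheses of Theorem~\ref{alt}, and it remains only to show that $T^nx$ converges for every $x\in X$.

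For this I would invoke reflexivity of $X$: a power bounded operator on a reflexive Banach space is mean ergodic, so the Cesàro averages $\frac1n\sum_{k=0}^{n-1}T^k$ converge strongly to a projection with range $N(I-T)$ and kernel $\overline{R(I-T)}$, giving the topological direct sum $X=N(I-T)\oplus\overline{R(I-T)}$. Fix $x\in X$ and write $x=y+z$ with $(I-T)y=0$ and $z\in\overline{R(I-T)}$. Then $T^ny=y$ for all $n$, while $T^nz\to 0$: this is immediate when $z=(I-T)w$, since $T^nz=(T^n-T^{n+1})w\to 0$, and the general case follows by approximating $z$ by elements of $R(I-T)$ and using $\sup_n\|T^n\|\le 1$. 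Hence $T^nx\to y$, so the iterates converge for every $x$, and Theorem~\ref{alt} yields $\Am(I-T)<\pi$.

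An alternative ending is available: since $R(I-T)$ is closed we have $\overline{R(I-T)}=R(I-T)$, so the mean ergodic decomposition already reads $X=R(I-T)\oplus N(I-T)$, and one may conclude directly from Theorem~\ref{main} without passing through the convergence of $T^n$; but routing the argument through Theorem~\ref{alt} fits the theme of this section.

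I do not expect a genuine obstacle, since the proof is essentially an assembly of the Katznelson--Tzafriri theorem and the mean ergodic theorem for power bounded operators on reflexive spaces. The only point demanding a little care is the passage from the convergence of the Cesàro means, which is what mean ergodicity supplies, to the convergence of the powers $T^n$ themselves; this is exactly where the strong form $\|T^n(I-T)\|\to 0$ (rather than mere asymptotic regularity) enters.
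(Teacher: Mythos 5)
Your proof is correct, and its skeleton is the same as the paper's: verify that $T$ is asymptotically regular, establish that the powers $T^n$ converge strongly, and then invoke Theorem~\ref{alt}. The difference is in how the middle step is handled. The paper simply cites \cite[Theorem 4.1]{badea} for the strong convergence of $T^n$ (and asserts asymptotic regularity without comment), whereas you reprove both ingredients from scratch: Katznelson--Tzafriri gives $\|T^n(I-T)\|\to 0$, hence asymptotic regularity, and the mean ergodic theorem for power bounded operators on reflexive spaces gives $X=N(I-T)\oplus\overline{R(I-T)}$, after which the convergence of $T^n$ on each summand is elementary. This buys self-containedness at the cost of invoking two classical theorems the paper does not need to name. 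Two small remarks. First, your claim that the uniform form $\|T^n(I-T)\|\to 0$ is essential for passing from Ces\`aro convergence to convergence of the powers is a slight overstatement: pointwise asymptotic regularity together with $\sup_n\|T^n\|\le 1$ already suffices for the density argument on $\overline{R(I-T)}$. Second, your alternative ending is a genuine shortcut worth noting: since $R(I-T)$ is closed, the mean ergodic decomposition already reads $X=R(I-T)\oplus N(I-T)$, so Theorem~\ref{main} applies directly and the convergence of the powers $T^n$ never needs to be discussed; the paper routes through Theorem~\ref{alt} only to keep the section thematically unified.
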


\begin{proof}
To see this note that $T$ is asymptotically regular and by \cite[Theorem 4.1]{badea} its iterates $T^n$ converge strongly. Hence the result follows by Theorem \ref{alt}.
\end{proof}

\begin{remark}
(a) Since $T$ in Corollary \ref{primitive} is a contraction $\sigma(I-T)$ does not separate 0 from $\infty$ and hence, by \cite[Theorem 4.1]{drivyann1}, we have that the Krein amplitude of $I-T$ is also less than $\pi$.\\
(b) Corollary \ref{primitive} combined with \cite[Main Theorem]{badea} shows the following: If $T$ is a convex combination of products of orthoprojections on a complex Banach space $X$, $R(I-T)$ is closed, $X$ is uniformly convex or uniformly smooth or reflexive, and the orthoprojections are of class (D) (for the definition see \cite{badea}), then $\Am(I-T)<\pi$. We would like to note here that since such a $T$ may be quasinilpotent with closed range, the generalized amplitude of such a $T$, by Corollary \ref{quasinilpotent}, may be equal to $\pi$.
\end{remark}


\end{document}